\newtheorem{theorem}{Theorem}[section]
\newtheorem{corollary}[theorem]{Corollary}
\newtheorem{proposition}[theorem]{Proposition}
\newtheorem{question}[theorem]{Question}
\newtheorem{definition}[theorem]{Definition}
\newtheorem{example}[theorem]{Example}
\newtheorem{remark}[theorem]{Remark}
\newtheorem*{ack}{Acknowledgement}
\newcommand{\Alex}{\operatorname{Alex}}
\newcommand{\Aut}{\operatorname{Aut}}
\newcommand{\Conj}{\operatorname{Conj}}
\newcommand{\Core}{\operatorname{Core}}
\newcommand{\Inn}{\operatorname{Inn}}
\newcommand{\T}{\operatorname{T}}
\newcommand{\R}{\operatorname{R}}
\newcommand{\id}{\mathrm{id}}
\def\bea{\begin{eqnarray}}
\def\eea{\end{eqnarray}}
\def\nn{\nonumber}
\begin{document}
\title{$n$-valued quandles and associated bialgebras
}

\author[1,2,3]{Valeriy G. Bardakov\footnote{bardakov@math.nsc.ru}}
\author[3]{Tatyana A. Kozlovskaya\footnote{t.kozlovskaya@math.tsu.ru}}
\author[4,5]{Dmitry~V.~Talalaev\footnote{dtalalaev@yandex.ru}}
\affil[1]{\small Sobolev Institute of Mathematics, Novosibirsk 630090, Russia.
}
\affil[2]{\small Novosibirsk State Agrarian University, Dobrolyubova street, 160, Novosibirsk, 630039, Russia.
}
\affil[3]{\small Regional Scientific and Educational Mathematical Center of Tomsk State University,
36 Lenin Ave., 14, 634050, Tomsk, Russia.  
}
\affil[4]{\small Lomonosov Moscow State University, 
119991, Moscow, Russia.
}
\affil[5]{\small Demidov Yaroslavl State University, 150003, Sovetskaya Str. 14
}


\maketitle

\begin{abstract}
 The principal aim of this article is to introduce and study $n$-valued quandles and  $n$-corack bialgebras.
We elaborate the basic methods of this theory, reproduce the coset construction known in the theory of $n$-valued groups. We also consider a construction  of  $n$-valued quandles using $n$-multi-quandles. In contrast to the case of $n$-valued groups  this construction turns out to be quite rich in algebraic and  topological applications.  An important part of the work is the study of the properties of $n$-corack bialgebras those role is analogous to the group bialgebra.

{\it Keywords}: {Multi-set, multivalued group,  multi-group, rack, quandle, $n$-valued quandle, bi-algebra, rack bi-algebra.}

 \textit{Mathematics Subject Classification 2010: Primary 20N20; Secondary  Secondary 16S34, 05E30}
\end{abstract}

\newpage
\tableofcontents

\section{Introduction}

The notion of an $n$-valued group was introduces in 1971 by V.~M.~Buchstaber and S.~P.~Novikov \cite{BN}  in the theory of characteristic classes of vector bundles.  The product of a pair of elements in such an algebraic system is an $n$-multi-set, the set of $n$ points with multiplicities. This generalization of a group appeared to be fruitful in topology \cite{BN}, algebraic geometry \cite{BR1}, representation theory, combinatorics, dynamical systems and number theory \cite{BV}. An appropriate survey on $n$-valued groups and its applications  can be found in \cite{B}. 

The $n$-ary operations are quite common in different areas. The Yang-Baxter map could be considered as a 2-valued operation - an operation of biarity (2,2) in PROP theory terminology \cite{MCL}. In this particular case we have a pair of binary operations on a set.
In the theory of algebraic systems it is common that several operations have equal arity. We give some examples of this situation. 
A skew brace  is a set with two group operations, which satisfy appropriate axioms (\cite{Rump}-\cite{GV}). In  \cite{BNY} it were introduced brace systems as a set with a family of group operations with some cosistency conditions.
 J.-L.~Loday \cite{Loday2} introduced a dimonoid as a set with two semigroup operations, which are related by a set of axioms. In \cite{Loday2} it was presented  a construction of a free dimonoid generated by a given set.
Dimonoids are examples of duplexes, which were introduced by T.~Pirashvili in \cite{Pir}. A duplex is an algebraic system with two associative binary operations (these operations may not be related to each other). 
T.~Pirashvili  constructed a free duplex generated by a given set.

In the present article we are studying  (semi-)group  systems  $\mathcal{G} = (G, *_i, i \in I)$, that are algebraic systems such that  $(G, *_i)$ is a  (semi-)group for any $i \in I$. An example of a semi-group system with two operations is a duplex. We will call a  (semi-)group  systems $\mathcal{G}$ by $I$-multi-semigroup (respectively, $I$-multi-group) if the operations satisfy the following axioms
$$
(a *_i b) *_j c = a *_i (b *_j c),~~ a, b, c \in G,~~i, j \in I. 
$$
If $|I| = n$, then we will call this structure an $n$-multi-semigroup (resepctivle, $n$-multi-group).
N.~A.~Koreshkov \cite{Kor} calls an $n$-multi-semigroup as an $n$-tuple semigroup.

In the present article we  investigate connections between $n$-multi-groups  and $n$-valued groups.
If we have a group system $\mathcal{G} = (G, *_i, i \in I)$, where $|I| = n$, we can  define a $n$-valued multiplication 
$$
a * b = [a *_1 b, a *_2 b, \ldots,  a *_n b],~~a, b \in G,
$$ 
and study conditions under which the  algebraic system $(G, *)$ is a $n$-valued group.
We prove  that if in all groups 
$(G, *_i)$, $i = 1, 2, \ldots, n$ all units are equal  and $(G, *)$ is an $n$-valued group, then $*_i = *_j$ for all $1 \leq i, j \leq n$. It means, in particular, that in some sense multi-groups and multi-valued groups are different algebraic systems.

The second part of our article is devoted to quandles. Recall that a quandle is an algebraic system with a binary operation that satisfies axioms encoding the three Reidemeister moves of planar diagrams of links in the 3-space. 
Quandles were introduced by   Joyce \cite{Joyce} and Matveev \cite{Matveev}, who showed that link quandles are complete invariants 
of non-split links up to orientation of the ambient 3-space. Also, these objects are userful 
 in other  areas of mathematics: in  group theory, set-theoretic solutions to the Yang-Baxter equations and Hopf algebras \cite{Andr}, discrete integrable systems.
 Automorphisms of quandles, which reveal a lot about their internal structures, have been investigated in much detail in a series of papers \cite{BDS, BarTimSin}.

In the present article we introduce and study $n$-valued racks and quandles.  We suggest some constructions for them. In particular, a coset quandle, which is similar to
the coset group in the category of groups. Second construction is new,  we prove that if a set $X$ is equipped with $n$ quandle operations $*_i$ such that
\begin{equation} \label{ass}
(x *_i y) *_j z= (x *_j z) *_i (y *_j z),~~~x, y, z \in X,
\end{equation}
then an $n$-valued multiplication 
$$
x * y = [x *_1 y, x *_2 y, \ldots, x *_n y]
$$
defines  an $n$-valued quandle structure on $X$. We will call a set $X$ with a set of quandle operations $(X, *_1, *_2, \ldots, *_n)$ satisfying axioms of mixed associativity (\ref{ass}) a $n$-multi-quandle. Such axioms arise in \cite{BF} in studying multiplications of quandle structures. 
In this paper one considered quandle systems $\mathcal{Q} = (Q, *_i, i \in I)$, where  $(Q, *_i)$ is a quandle for any $i \in I$, and defined a multiplication $*_i *_j$ by the rule
$$
p (*_i *_j) q = (p *_i q) *_j q,~~p, q \in Q.
$$
Generically  $(Q, *_i *_j)$ is not a quandle, but if one imposes additinally
\begin{equation} \label{quand}
(x *_i y) *_j z= (x *_j z) *_i (y *_j z),~~(x *_j y) *_i z= (x *_i z) *_j (y *_i z),~~~x, y, z \in Q,
\end{equation}
then  $(Q, *_i *_j)$ and  $(Q, *_j *_i)$ are quandles. The term $n$-multi-quandle introduced V.~G.~Turaev \cite{T}  and gave them a  topological interpretation.

In the last part of the article 
we develop the theory of bialgebras associated to an $n$-valued quandle. The consistency condition for the multiplcation and comultiplication in such bialgebras is different from those in Hopf algebras. 
The self-distributivity axiom of a rack $X$ reads as 
\bea
(c * b) * a = (c * a) * (b * a),~~a, b, c \in X.\nn
\eea
As was shown in \cite{ABRW} the natural way to extend the self-distributivity to $\Bbbk[X]$ is the following relation
\bea
(c * b) * a = \sum_{(a)} (c * a^{(1)}) * (b * a^{(2)}),\nn
\eea
where we use the Sweedler notation for comultiplication:
\bea
\Delta a=\sum_i a_i^{(1)}\otimes a_i^{(2)}=\sum a^{(1)}\otimes a^{(2)}.\nn
\eea
The main result of this section is that the space of functions on an $n$-quandle is $n$-corack bialgebra. This statement is an analog of an $n$-Hopf algebra structure related to an $n$-valued group \cite{BR1}.

The article is organized sa follows.  In Section \ref{sec-prelim} we recall the definition of an $n$-valued group, recall two constructions of these groups, introduce  $n$-multi-groups, gives  example, formulate some properties of these objects.  In particular, in Proposition \ref{equel} is shown that if in a group system   $(G, *_1, *_2, \ldots, *_n)$  the unit elements of the operations 
 $*_i$ and  $*_j$ coincide,  then the operations $*_i$ and $*_j$ are equal. Example \ref{BE}  shows connection between  solution  of the Braid equation with  2-multi-semi-groups. On a set of squire  matrices over a field we define some deformations of the matrix product and get a set of operations. Proposition \ref{propety} give conditions under which we can construct a  2-multi-group.

In Section \ref{quand} we recall a definitions of racks and quandles, introduce $n$-valued racks and $n$-valued quandles, give some example.
We define the coset construction of an $n$-valued quandle, which is  an analogue of the group coset construction.

In Section \ref{ordqun}
we give definitions of  $n$-multi-rack and quandle, provide some constructions. In Theorem \ref{coset} we show that any $n$-multi-rack ($n$-multi-quandle) defines a $n$-valued rack (respectively, $n$-valued quandle). It contrast with the group case, in which if we have a $n$-multi-group such that the group operations have the same unit, then this multi-group is a $n$-valued group if and only if all group operations are equal. We construct linear 2-multi-racks on the set of integers in Proposition \ref{mr} and prove in Proposition \ref{BE}  that some of these 2-multi-racks give solution to the Braid equation.

In Section \ref{RBA} we discuss a possibility to define a comultiplication on rack rings and define rack bialgebra.

\bigskip


\section{$n$-valued groups and  $n$-multi-groups} \label{sec-prelim}

At first,  we recall a definition and construction of $n$-valued groups, which can be found in \cite{B}. 
\\
Let $X$ be a non-empty set. An $n$-{\it valued multiplication} on $X$ is a map
$$
\mu \colon X \times X \to (X)^n = Sym^nX,~~\mu(x, y) = x * y = [z_1, z_2, \ldots, z_n],~~z_k = (x*y)_k.
$$
The next axioms are natural generalization of group axioms.

{\it Associativity}. The $n^2$-sets:
$$
[x * (y * z)_1, x * (y * z)_2,  \ldots, x * (y * z)_n], ~~[ (x * y)_1 * z, (x * y)_2 * z,  \ldots, (x * y)_n * z]
$$
coincide for all $x, y, z \in X$.

{\it Unit}. There exists an element $e \in X$ such that 
$$
e * x = x * e = [x, x, \ldots, x]
$$
 for all $x \in X$.

{\it Inverse}. It is defined a map $inv \colon X \to X$ such that 
\bea
e \in inv(x) * x \qquad \mbox{and} \qquad e \in x * inv(x)  \nn
\eea
for all $x \in X.$
\begin{definition}
\label{n-valued}
An $n$-{\it valued group} is a quadruple $\mathcal{X} = (X, \mu, e, inv)$ such that the map $\mu$ is associative, has a unit and an inverse. 
\end{definition}

Let us recall two constructions of  $n$-valued groups.
 Let $G$ be a group with multiplication $m$, $A \leq \Aut(G)$ and $|A| = n$. The set of orbits  $X = G / A$  can be equipped with an $n$-multiplication $\mu \colon X \times X \to (X)^n$ defined by the rule
$$
\mu(x, y) = \pi (m (\pi^{-1}(x), \pi^{-1}(y))),
$$
where $\pi \colon G \to X$ is the canonical projection. Then $X$ with multiplication $\mu$ is  a $n$-valued group with the unit  $e_X = \pi(e_G)$ and the inverse $inv(x)$ of $x \in X$ is $\pi (( \pi^{-1}(x))^{-1})$. This $n$-valued group is called the {\it coset group} of the pair $(G, A)$.

The second construction of an $n$-valued group is  called a {\it double coset group}  of a pair $(G, H)$,
 where $G$ is a group and $H$ - its subgroup of cardinality  $n$. Denote by $X$  the space of double coset classes $H \backslash G / H$.
One could define the  $n$-valued multiplication $\mu \colon X \times X \to (X)^n$
by the formula
$$
\mu(x, y) = \{H g_1 H\} * \{H g_2 H\} = [\{H g_1 h g_2 H,~~h \in H].
$$
 and the inverse element $inv_X(x) = \{H g^{-1} H\}$, where 
$x =\{H g H\}$.

\medskip

An algebraic system  $\mathcal{G} = (G, *_i, i \in I)$, where $(G, *_i)$ is a (semi-)group  for any $i \in I$ is said to be a 
(semi-)group system. 
A semigroup system with two operations is called a duplex. We call $\mathcal{G}$ by $I$-multi-(semi-)group if the operations are related by the axiom of mixed associativity,
$$
(a *_i b) *_j c = a *_i (b *_j c),~~ a, b, c \in S,~~i, j \in I. 
$$
A multi-semigroup with $n$ operations is called by $n$-tuple semigroup in \cite{Kor}.

\begin{remark}
In \cite{T} it were defined several homogeneous algebraic systems which are generalization of  multi-(semi-)group.
\end{remark}

The next proposition shows that the definition of an $n$-multi-group with equal unit elements is trivial.

\begin{proposition} \label{equel}
If the unit elements of an $n$-multi group $(G, *_1, *_2, \ldots, *_n)$ coincide
 for some $i, j \in \{ 1, 2, \ldots, n \}$:   $e_i = e_j,$ then the corresponding operations $*_i = *_j$ also coincide.
\end{proposition}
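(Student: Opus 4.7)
The plan is to exploit the mixed associativity axiom $(a *_i b) *_j c = a *_i (b *_j c)$ by substituting the common unit $e := e_i = e_j$ into the middle slot. First I would observe that since $e_i = e_j = e$, the element $e$ serves simultaneously as a two-sided unit for both $*_i$ and $*_j$, so in particular $a *_i e = a$ and $e *_j c = c$ for every $a, c \in G$.

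Next I would set $b = e$ in the mixed associativity identity. The left-hand side becomes $(a *_i e) *_j c = a *_j c$ using the right-unit property for $*_i$, while the right-hand side becomes $a *_i (e *_j c) = a *_i c$ using the left-unit property for $*_j$. Comparing these gives $a *_j c = a *_i c$ for all $a, c \in G$, which is exactly the desired equality $*_i = *_j$.

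I do not expect any real obstacle here: the proof is essentially a one-line application of the mixed associativity axiom at the unit. The only subtlety worth noting is that the argument uses $e$ as both a right-unit (for $*_i$) and a left-unit (for $*_j$) in the same step, which is why the hypothesis $e_i = e_j$ (giving a common two-sided unit) is exactly what is required. The proposition then makes precise the informal claim that an $n$-multi-group with a shared unit degenerates to a single group operation.
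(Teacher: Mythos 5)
Your proof is correct and is exactly the paper's argument: both substitute the common unit into the middle slot of the mixed associativity identity, using it as a right unit for $*_i$ on one side and a left unit for $*_j$ on the other to conclude $x *_i z = x *_j z$. No differences worth noting.
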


\begin{proof}
Let $e = e_i = e_j$, then if we put $y = e$ in the axiom of mixed associativity, 
$$
(x *_i y) *_j z = x *_i (y *_j z),
$$
we get $x  *_j z = x *_i  z$ for any $x, z \in G$. Hence, $*_i = *_j$.
\end{proof}


Let us give some examples of  $n$-multi groups.

\begin{example} (Pencil of products) 
Let $A$ be a vector space with two binary associative algebraic operations $\circ_1$,  $\circ_2$, such that the next two axioms hold
\bea\label{cons}
(a \circ_1 b) \circ_2 c = a \circ_1 (b \circ_2 c),~~~(a \circ_2 b) \circ_1 c = a \circ_2 (b \circ_1 c),
\eea
for all $a, b, c \in A$. In particular, if $\circ_1 = \circ_2$, then these axioms fulfill. 

One can define a $2$-valued multiplication  on $A$:
$$
a * b = [a \circ_1 b, a \circ_2 b],~~a, b \in A.
$$
This operation is associative.
\end{example}

The mixed associativity axioms  (\ref{cons}), which represent the  consistency condition, are equivalent to Hochschild cocycle condition and  allows to define deformations of both operations $\circ_1$ and $\circ_2$ by the formulas
$$
a (\circ_1)_t b = a \circ_1 b + t (a \circ_2 b),~~~a (\circ_2)_t b = t( a \circ_1 b) + a \circ_2 b.
$$

Further we will use this idea for construction some operations on the set of matrices. 
Let $M_n(\Bbbk)$ be a matrix ring over a field $\Bbbk$. Let us fix a matrix $M \in M_n(\Bbbk)$ and define a set of multiplications $m_t \colon M_n(\Bbbk) \times M_n(\Bbbk) \to M_n(\Bbbk)$ 
$$
m_t (A, B) = A B + t A M B,~~t \in \Bbbk,~~A, B \in M_n(\Bbbk).
$$ 
Then we can define a 2-valued multiplication
$$
\mu(A, B) = (m_{t_1}(A, B), m_{t_2}(A, B)),  ~~~m_{t_i}(A, B) =  A B + t_i A M B,~~t_i \in \Bbbk,~~i=1,2.
$$

We have an obvious
\begin{proposition} \label{propety}
1) The 2-valued multiplication  $\mu(A, B)$ is associative.

2) The product $A B$ in $M_n(\Bbbk)$ has a unit $E$ given by the unit matrix. The product $A M B$ has the unit element $M^{-1}$, which is the inverse matrix in the ring $M_n(k)$ 

3) The inverse to $A$ under the multiplication $\nu(A, B) = A M B$ is equal to $M^{-1} A^{-1} M^{-1}$.
\end{proposition}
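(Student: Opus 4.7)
The plan is to verify each of the three assertions directly. For (1), I will first show that for any $s, t \in \Bbbk$ the pair of binary operations $(m_s, m_t)$ on $M_n(\Bbbk)$ satisfies the mixed associativity axiom. Expanding both $m_s(m_t(A,B), C)$ and $m_t(A, m_s(B,C))$ in terms of the four monomials $ABC$, $AMBC$, $ABMC$, $AMBMC$ and matching coefficients uses only the associativity of ordinary matrix multiplication. Specializing $s = t$ will simultaneously give the associativity of each individual $m_t$. Associativity of the 2-valued multiplication $\mu$ is then a general consequence of mixed associativity of the component operations: the four-element multi-sets $[m_{t_k}(m_{t_l}(A,B), C)]_{k,l}$ and $[m_{t_k}(A, m_{t_l}(B,C))]_{k,l}$ will coincide after swapping the roles of the indices $k$ and $l$.

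For (2), that the identity matrix $E$ serves as the unit for ordinary matrix multiplication is tautological. For the second product $\nu(A,B) = AMB$ I will look for $U$ satisfying $U M B = B$ and $A M U = A$ for all $A,B \in M_n(\Bbbk)$; the arbitrariness of $A$ and $B$ forces $UM = MU = E$, so under the invertibility of $M$ (tacit in the statement, since $M^{-1}$ is invoked) one gets $U = M^{-1}$.

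For (3), I will solve the equation $A M X = M^{-1}$ to obtain $X = M^{-1} A^{-1} M^{-1}$, then separately verify $X M A = M^{-1}$ to confirm that this is a genuine two-sided $\nu$-inverse of $A$.

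The computations are entirely elementary; there is no real obstacle, only the bookkeeping in step (1). The one point worth flagging is the implicit hypothesis that $M$ and $A$ are invertible in $M_n(\Bbbk)$, which is needed for (2) and (3) respectively but is not stated explicitly in the proposition.
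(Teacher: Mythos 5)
Your verification is correct and is exactly the direct computation the paper leaves to the reader (the proposition is stated as ``obvious'' with no proof given); in particular your expansion of $m_s(m_t(A,B),C)$ and $m_t(A,m_s(B,C))$ into the four monomials $ABC$, $AMBC$, $ABMC$, $AMBMC$ establishes the mixed associativity $(A *_i B) *_j C = A *_i (B *_j C)$ that makes the two $4$-element multisets coincide. Your remark that parts (2) and (3) tacitly require $M$ and $A$ to be invertible in $M_n(\Bbbk)$ is a legitimate point the paper does not make explicit.
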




\begin{example} (Solution to the Braid equation) \label{BE}
Let $(S, \cdot)$ be a semigroup with a unit. Then $R(a, b) = (1, ab)$ gives a degenerate solution to the Braid equation:
$$
(R \times \id) (\id \times R) (R \times \id) = (\id \times R) (R \times \id)  (\id \times R).
$$
 If we introduce  a new operation on $S$ by the rule 
$a \circ b = 1$, then $(S, \circ)$ is a semigroup without unit. We can define a 2-valued multiplication  $a * b =  [a \circ b, a b]$, $a, b \in S$. This multiplication is not associative. Indeed, if $c \not= 1$, then 
$$
(a \circ b) c  \not= a \circ (b c).
$$
\end{example}

\bigskip



\section{$n$-valued racks and quandles} \label{quand}

\subsection{Definition and examples}

Let us recall the definitions of rack and quandle structures (see \cite{Joyce, Matveev}).

\begin{definition}
A {\it quandle} is a non-empty set $Q$ with a binary operation $(x,y) \mapsto x * y$ satisfying the following axioms:
\begin{enumerate}
\item[(Q1)]{\it Idempotency}:  $x*x=x$ for all $x \in Q$.
\item[(Q2)]  {\it Invertibility}: for any $x,y \in Q$ there exists a unique $z \in Q$ such that $x=z*y$.
\item[(Q3)] {\it Self-distributivity}: $(x*y)*z=(x*z) * (y*z)$ for all $x,y,z \in Q$.
\end{enumerate}
An algebraic system satisfying only (Q2) and (Q3)  is called a {\it rack}. 
\end{definition}
Many interesting examples of quandles come from group theory, demonstrating deep connection between these subjects.

\par

\begin{itemize}
\item If $G$ is a group, $m$ is an integer,  then the binary operation $a*_m b= b^{-m} \, a \,  b^m$ turns $G$ into the quandle $\Conj_m(G)$ called the $m$-{\it conjugation quandle} on $G$. 
If $m=1$, this quandle is called {\it conjugation quandle} and is denoted $\Conj(G)$.
\item A group $G$ with the binary operation $a*b= b a^{-1} b$ turns the set $G$ into the quandle $\Core(G)$ called the {\it core quandle} of $G$. In particular, if $G= \mathbb{Z}_n$, the cyclic group of order $n$, then it is called the {\it dihedral quandle} and denoted by $\R_n$.
\item Let $G$ be a group and $\phi \in \Aut(G)$. Then the set $G$ with binary operation $a * b = \phi(ab^{-1})b$ forms a quandle $\Alex(G,\phi)$ referred as the  {\it generalized Alexander quandle} of $G$ with respect to $\phi$.
\end{itemize}
\medskip

A quandle  $Q$ is called {\it trivial} if $x*y=x$ for all $x, y \in Q$.  Unlike groups, a trivial quandle can have arbitrary number of elements. We denote the $n$-element trivial quandle by $\T_n$ and an arbitrary trivial quandle by $\T$.

Notice that the axioms (Q2) and (Q3) are equivalent to the map $S_x \colon Q \to Q$ given by $S_x(y)=y*x$ being an automorphism of $Q$ for each $x \in Q$. These automorphisms are called {\it inner automorphisms}, and the group generated by all such automorphisms is denoted by $\Inn(X)$.


By analogy with $n$-valued groups let us introduce $n$-valued racks and $n$-valued quandles.

\begin{definition} \label{many-rack}

{\it  An $n$-valued rack} is a triple $\mathcal{X} = (X, *, \bar{*})$ in which $X$ is a non-empty set with $n$-valued multiplications $*$ and $\bar{*}$ such that
$$
x * y = [(x * y)_1, (x * y)_2, \ldots, (x * y)_n],~~x \bar{*} y = [(x \bar{*} y)_1, (x \bar{*} y)_2, \ldots, (x \bar{*} y)_n],~~x, y \in X,
$$
i.e. $x * y$ and $x \bar{*} y$ are multisets in $Sym^n(X)$. The next axioms hold

(M1)  {\it Invertibility}:  for any $x, y \in X$ the element $x$ lies in the $n^2$-multiset $(x * y) \bar{*} y$ and in the $n^2$-multiset
$(x \bar{*} y) * y$.

(M2) {\it Self-distributivity}: for any $x, y, z \in X$ the  $n^2$-multiset $(x * y) * z$ is a subset of the $n^3$-multiset $(x * z) * (y * z)$.

An $n$-valued rack  $\mathcal{X} = (X, *, \bar{*})$ is said to be an {\it $n$-valued quandle} if the next axiom holds

(M3)  {\it Idempotency}:  for any $x \in X$ the multiset $x * x$ contains $x$.
\end{definition}

\begin{example}
Let $G$ be a group and $I$ be a subset of integer numbers. For any $i \in I$ define $i$-conjugation quandle $Conj_i(G)$ on $G$. If $I$ is finite and contains $n$ elements, we can define $n$-valued multiplication
$$
g * h = [g *_i h,~~i \in I],~~g, h \in G.
$$
In this case $g \bar{*} h = [g \bar{*}_i h,~~i \in I]$ and it is easy to check that it is an $n$-valued quandle. 
\end{example}

\subsection{Coset construction} \label{coset}
There is an analogue of  the group coset construction  for quandles.

 Let $Q$ be a quandle with multiplication $m$ and its inverse $\bar{m}$ that means that for any $q, h \in Q$ the following holds
$$
\bar{m}(m(q, h), h) =  m(\bar{m}(q, h), h) = q.
$$
Further,  let $A \subset \Aut(Q)$ be a subgroup of order  $n$.
Then the set of orbits  $X = Q/A$ can be equipped with an $n$-valued multiplication $\mu \colon X \times X \to (X)^n$ by the rule
$$
\mu(x, y) = x * y = \pi (m (\pi^{-1}(x), \pi^{-1}(y))),
$$
where $\pi \colon Q \to X$ is the canonical projection. 

As in the case of groups (see \cite{B}) it is easy to prove the next theorem-definition.

\begin{theorem} 
 The multiplication $\mu$ defines an $n$-valued quandle  structure on the orbit space
$X = Q / A,$ called the  {\it coset $n$-valued quandle} of $(G, A)$, with the inverse operation
$$
 x \bar{*} y = \pi (\bar{m} (\pi^{-1}(x), \pi^{-1}(y))).
$$ 
\end{theorem}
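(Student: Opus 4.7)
The plan is to mirror the proof of the analogous statement for coset $n$-valued groups, translating each group-theoretic step into its quandle-theoretic counterpart and relying on the fact that the elements of $A$ are quandle automorphisms of $Q$.

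First I would unpack the definition of $\mu$ carefully. Choosing representatives $q\in\pi^{-1}(x)$ and $h\in\pi^{-1}(y)$, I would write
$$x*y=[\,\pi(q*a(h))\,:\,a\in A\,]\in\Sym^n(X),$$
and check that this multiset does not depend on the choice of $q$ or $h$. Independence of $h$ is built into the definition. For independence of $q$, observe that replacing $q$ by $a_0(q)$ gives $\pi(a_0(q)*a(h))=\pi\bigl(a_0(q*a_0^{-1}a(h))\bigr)=\pi(q*a_0^{-1}a(h))$, and as $a$ ranges over $A$ so does $a_0^{-1}a$, so the multiset is unchanged. The inverse multiplication $\bar{*}$ is treated identically.

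Next I would verify the three axioms. Axiom (M3) is immediate: the quandle idempotency $q*q=q$ shows that the $a=\id$ term of $x*x$ is $\pi(q)=x$, so $x$ occurs in $x*x$. For (M1), fix $q\in\pi^{-1}(x)$ and $h\in\pi^{-1}(y)$; then
$$(x*y)\bar{*}y=[\,\pi\bigl((q*a(h))\,\bar{*}\,b(h)\bigr):a,b\in A\,],$$
and the diagonal $b=a$ gives $\pi((q*a(h))\bar{*}a(h))=\pi(q)=x$ by quandle invertibility, so $x$ appears at least $n$ times in this $n^2$-multiset; the symmetric argument handles $(x\bar{*}y)*y$.

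The main work is axiom (M2). Expanding with representatives $q,h,k$ of $x,y,z$, the $n^2$-multiset on the left becomes
$$(x*y)*z=[\,\pi((q*a(h))*b(k)):a,b\in A\,],$$
while the $n^3$-multiset on the right is
$$(x*z)*(y*z)=[\,\pi\bigl((q*c(k))*(e(h)*(ed)(k))\bigr):c,d,e\in A\,],$$
where I used that $e\in\Aut(Q)$ to rewrite $e(h*d(k))=e(h)*(ed)(k)$. Applying quandle self-distributivity to the left-hand expression gives
$$\pi((q*a(h))*b(k))=\pi((q*b(k))*(a(h)*b(k))).$$
The key step will be to exhibit an injection of multisets by sending the index pair $(a,b)\in A\times A$ to the index triple $(c,d,e)=(b,\,a^{-1}b,\,a)\in A\times A\times A$; then $c(k)=b(k)$, $e(h)=a(h)$, and $(ed)(k)=b(k)$, so the two elements coincide. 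Because $A$ is a group this map is injective, and comparing multiplicities establishes the required multiset inclusion.

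I expect the computation for (M2) to be the main obstacle, because it forces one to keep track of three copies of $A$ on the right versus two on the left and to exploit that automorphisms commute with the quandle operation precisely in the right way to make the index reparametrization $(a,b)\mapsto(b,a^{-1}b,a)$ land in $A^3$. Everything else is a direct transcription of the group coset construction.
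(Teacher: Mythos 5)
Your proof is correct and follows exactly the route the paper intends: the paper itself omits the argument, saying only ``as in the case of groups it is easy to prove,'' and your write-up supplies precisely that transcription of the group coset construction, with the well-definedness check, the diagonal $b=a$ trick for (M1), and the index reparametrization $(a,b)\mapsto(b,a^{-1}b,a)$ correctly handling the multiset inclusion in (M2).
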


\begin{example}\label{coset-Q}
Let us consider the conjugation quandle $Q = Conj(S_3)$ on the symmetric group $S_3$ and a subgroup of inner automorphisms $A \subset Inn(Q)$ that is generated by multiplication on $s_1$. It is evident that $A \cong S_2$. The orbits of $Q$ under the action of $A$  are
\bea
\{ 1 \}, \{s_1\}, \{s_2,s_1 s_2 s_1\}, \{s_1 s_2, s_2 s_1\}.\nn
\eea

Let us denote these classes by $x_0$, $x_1$, $x_2$, $x_3$, respectively. The multiplication table for this $2$-valued quandle is given by
$$
\begin{tabular}{|c||c|c|c|c|}
    \hline
$Q/A$ & $x_0$ & $x_1$ & $x_2$ &$ x_3 $\\
  \hline \hline
$x_0$ & $[x_0,x_0]$ & $[x_0,x_0]$ & $[x_0,x_0]$ & $[x_0,x_0]$ \\
$x_1 $& $[x_1,x_1]$ & $[x_1,x_1]$ & $[x_2,x_2]$ & $[x_2,x_2]$ \\
$x_2$ & $[x_2,x_2]$ & $[x_2,x_2]$ & $[x_1,x_2]$ & $[x_1,x_2]$\\
$x_3 $& $[x_3,x_3]$ & $[x_3,x_3]$ & $[x_3,x_3]$ & $[x_3,x_3]$\\
  \hline
\end{tabular}.
$$

\end{example}

\begin{example}
Recall a  construction of coset 2-valued group on $\mathbb{Z}$ (see \cite{B}). 
 Let $(\mathbb{Z}, +)$ be the infinite cyclic group, $A = \{\id, -\id\}$ be its group of automorphisms. The set of orbits 
$$
X = \{ \{0\}, \{\pm a \}~|~a \in \mathbb{N} \}
$$
can be identified with the set of non-negative integers $\mathbb{Z}_+$ with the $2$-valued multiplication 
$$
\mu \colon \mathbb{Z}_+ \times \mathbb{Z}_+ \to (\mathbb{Z}_+)^2
$$
given by the formula
$$
x * y = [x+y, |x-y|],~~x, y \in \mathbb{Z}_+.
$$. a

Construct coset 2-valued quandle on $Q = Core(\mathbb{Z})$. 
Let $\varphi \in \Aut(Q)$, $\varphi(a) =-a$, $a \in \mathbb{Z}$. The set  of orbits: $X = \{ \{a, -a \}~|~a \in \mathbb{Z} \}$ with   a 2-valued multiplication
$$
\{a, -a \} * \{b, -b \} = [\{2b-a, -2b+a \}, \{2b+a, -2b-a \}],~~a, b, c \in \mathbb{Z},
$$
gives a coset 2-valued quandle. 
We can  identified the set of orbits with the set of non-negative integers $\mathbb{Z}_+$ with the $2$-valued multiplication 
$$
* \colon \mathbb{Z}_+ \times \mathbb{Z}_+ \to (\mathbb{Z}_+)^2
$$
given by the formula
$$
a * b = [2b + a, |2 b - a|],~~a, b \in \mathbb{Z}_+.
$$
\end{example}

\bigskip


\section{Multi-racks} \label{ordqun}

Recall the definition of an $I$-multi-rack (see \cite{T}). A non-empty set $X$ with  rack (quandle) operations $*_i$, $i \in I$ is called an $I$-multi-rack (respectively, an $I$-multi-quandle) if for any $i, j \in I$ the distributivity axioms hold
$$
(x *_i y) *_j z= (x *_j z) *_i (y *_j z),~~~x, y, z \in X. 
$$
If $|I| = n$, then we will call it an $n$-multi-rack (respectively, an $n$-multi-quandle). If $(X, *_1, *_2, \ldots, *_n)$ is an $n$-multi-rack, then we can define an $n$-valued rack $(X, *)$ with the operation 
$$
x * y = [x *_1 y, x*_2 y, \ldots, x *_n y],~~x, y \in X.
$$

As was shown in Proposition \ref{equel}, if two units
 $e_i$  and  $e_j$  in  an $n$-multi-group $(G, *_1, *_2, \ldots, *_n)$ coincide, then the  operations $*_i$ and $*_j$ coincide too.
The following theorem shows that in the case of quandles the situation is different.

\begin{theorem} 
\label{coset}
Let $X$ be a non-empty set with $n$ quandle (rack) operations $*_i$ such that for any pair $1 \leq i, j \leq n$ of indeces the following identities hold
\begin{equation} \label{distr}
(x *_i y) *_j z= (x *_j z) *_i (y *_j z),~~~x, y, z \in X. 
\end{equation}
Then  an $n$-valued multiplication 
$$
x * y = [x *_1 y, x *_2 y, \ldots, x *_n y]
$$
defines  an $n$-valued quandle (correspondingly, rack) structure on $X$.
\end{theorem}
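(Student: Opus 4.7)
The plan is to verify in turn the three defining axioms (M1), (M2), (M3) of an $n$-valued quandle (or (M1), (M2) for the rack case) from Definition \ref{many-rack}, using as ingredients only that each $(X, *_i)$ is a quandle (rack) in the ordinary sense and the mixed self-distributivity law (\ref{distr}). The only preparatory step is to specify the candidate inverse operation: since each $*_i$ is a rack operation, it has its own single-valued inverse $\bar{*}_i$ satisfying $(x *_i y) \bar{*}_i y = x = (x \bar{*}_i y) *_i y$, and I define the $n$-valued inverse by $x \bar{*} y = [x \bar{*}_1 y, \ldots, x \bar{*}_n y]$.

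For (M1) the $n^2$-multiset $(x*y) \bar{*} y$ is exactly $[(x *_i y) \bar{*}_j y : 1 \leq i,j \leq n]$. The diagonal entries $i = j$ each contribute the element $x$, so $x$ appears in $(x*y)\bar{*}y$ with multiplicity at least $n$; the argument for $(x \bar{*} y)*y$ is identical. Axiom (M3) in the quandle case is even easier: idempotency of each $*_i$ gives $x *_i x = x$, so $x * x = [x,x,\ldots,x]$, which plainly contains $x$.

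The substance of the argument is (M2). The $n^2$-multiset $(x*y)*z$ equals $[(x *_i y) *_j z : 1 \leq i,j \leq n]$, while the $n^3$-multiset $(x*z)*(y*z)$ equals $[(x *_k z) *_m (y *_l z) : 1 \leq k,l,m \leq n]$. Applying the hypothesis (\ref{distr}) to each entry of the smaller multiset rewrites it as $(x *_j z) *_i (y *_j z)$, which is precisely the entry of the larger multiset at $(k,l,m) = (j,j,i)$. The assignment $(i,j) \mapsto (j,j,i)$ is an injection from the index set $\{1,\ldots,n\}^2$ into $\{1,\ldots,n\}^3$, so this realizes $(x*y)*z$ as a sub-multiset of $(x*z)*(y*z)$, establishing (M2).

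Conceptually nothing is very deep here, but the main thing to be careful about is bookkeeping: (M2) is a statement about multiset inclusion, not set inclusion or equality, so I must explicitly exhibit an index-level injection rather than argue set-theoretically. The proof for $n$-valued racks is the same minus step (M3). Everything else reduces to unwinding definitions, and no further use of the individual rack/quandle axioms of each $*_i$ is needed beyond idempotency (for (M3)) and the existence of $\bar{*}_i$ (for (M1)).
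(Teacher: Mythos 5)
Your proof is correct and follows the same route the paper intends: the paper's own proof is just the one line ``Follows from the definition of an $n$-valued quandle,'' and your verification of (M1)--(M3) — including the choice of $\bar{*}$ from the componentwise inverses and the index injection $(i,j)\mapsto(j,j,i)$ for the multiset inclusion in (M2) — is exactly the unwinding of definitions the authors leave implicit.
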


\begin{proof}
Follows from the definition of an $n$-valued quandle.
\end{proof}

In particular, if we take the powers of a quandle operation, then they satisfies the axioms (\ref{distr}) and we get 

\begin{corollary}
 Let $(Q, \circ)$ be an $n$-quandle (it means that for any $q, h \in Q$ we have  $q *^n h = q$). Let us define  an $n$-valued multiplication on $Q$ by the formula
$$
x * y = [x \circ y, x \circ^2 y, \ldots, x \circ^n y],~~~x, y \in Q.
$$
Then
$(Q, *)$ is an $n$-valued quandle.
\end{corollary}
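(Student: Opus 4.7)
The plan is to reduce the statement directly to Theorem \ref{coset} by taking $*_i := \circ^i$ for $i = 1, 2, \ldots, n$. Two things need to be checked: that each power $\circ^i$ is itself a quandle operation on $Q$, and that the family $(\circ^i)_{i=1}^n$ satisfies the mixed self-distributivity axiom (\ref{distr}). Once both are in hand, Theorem \ref{coset} yields that the $n$-valued multiplication $x * y = [x \circ y, x \circ^2 y, \ldots, x \circ^n y]$ defines an $n$-valued quandle structure on $Q$.

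For the first point, idempotency $x \circ^i x = x$ follows by a trivial induction on $i$ from $x \circ x = x$, while invertibility of the right translation $S_x^i(y) = y \circ^i x$ is immediate because it is a composition of the invertible maps $S_x$. Self-distributivity of $\circ^i$ with itself will drop out as the special case $i = j$ of the mixed distributivity I establish next. The key calculation is the auxiliary identity
\begin{equation*}
(x \circ^i y) \circ z \;=\; (x \circ z) \circ^i (y \circ z),
\end{equation*}
proved by induction on $i$: the base case $i=1$ is the self-distributivity of $\circ$, and the inductive step uses self-distributivity once more, since $(x \circ^{i+1} y) \circ z = ((x \circ^i y) \circ y) \circ z = ((x \circ^i y) \circ z) \circ (y \circ z)$, to which the inductive hypothesis can be applied. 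Iterating this identity $j$ times with $z$ replaced successively yields
\begin{equation*}
(x \circ^i y) \circ^j z \;=\; (x \circ^j z) \circ^i (y \circ^j z),
\end{equation*}
which is exactly (\ref{distr}) for the family $(*_i) = (\circ^i)$.

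With both conditions verified, Theorem \ref{coset} applies and delivers the $n$-valued quandle structure. The main obstacle is really just the bookkeeping in the inductive proof of the auxiliary identity; the $n$-quandle hypothesis $x \circ^n y = x$ is not used to obtain the structure, but it ensures that $\circ^n$ closes up to the trivial operation so that considering exactly the first $n$ powers is the natural stopping point (since $\circ^{n+1} = \circ$). No further axiom-checking is needed, as the rack case is obtained by simply omitting the idempotency verification.
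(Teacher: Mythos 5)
Your proposal is correct and follows essentially the same route as the paper, which derives the corollary from Theorem \ref{coset} by observing that the powers $\circ^i$ of a quandle operation satisfy the mixed distributivity axioms (\ref{distr}); the paper leaves this observation unproved, and your induction establishing $(x \circ^i y) \circ^j z = (x \circ^j z) \circ^i (y \circ^j z)$ supplies exactly the missing verification.
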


\begin{remark}
The principal stimulus for us for introducing $n$-valued quandles is the notion of  $n$-valued group. But there is another appearance of multivalued structure in the context of quandle product which was introduced in \cite{BF}.

Suppose that  we have two quandle structures $(X, *_1)$ and  $(X, *_2)$ on the set $X$. Then  their  quandle product  $(X, *_1) \circ (X, *_2) = (X, *_1 *_2)$ is an algebraic system on $X$ with the operation
$$
x (*_1 *_2) y = (x *_1 y) *_2 y,~~x, y \in X.
$$
In general this algebraic system is not a quandle. But if additionally $*_2$ is distributive with respect to $*_1$ 
$$
(x *_1 y) *_2 z = (x *_2 z) *_1 (y *_2 z),\qquad  x, y, z \in X,
$$
then $(X, *_1 *_2)$ is a quandle. If $*_2$ is distributive with respect to $*_1$ and vice versa, then we can define an operation $*_1^{n_1} *_2^{m_1} \ldots *_1^{n_k} *_2^{m_k}$, where $n_i$ and $m_i$ are integers. This operation defines a quandle on $X$ and there is an abelian group structure on the set of these quandles.  The unit element of this group is the trivial quandle on $X$. This result gives an explanation of the axiom (M1) in  Definition \ref{many-rack}.
\end{remark}

\bigskip


\subsection{$2$-multi-racks on $\mathbb{Z}$}

In this subsection we construct some rack operations on the set of integers $\mathbb{Z}$.
The operation $x * y = 2y-x$ gives a quandle structure on  $\mathbb{Z}$ and $(\mathbb{Z}, *)$ is the core quandle $\Core \mathbb{Z}$. Let us consider a more general operation
$$
x \underset{\varepsilon, a, b}{*} y = \varepsilon x + a y + b,~~x, y \in \mathbb{Z}
$$
for some $\varepsilon \in \{ \pm 1 \}, a, b \in \mathbb{Z}.$  We can consider this operation as a deformation of the operation $*$.

%
%


\begin{proposition} \label{ZRT}
1) The set of integers $\mathbb{Z}$ with an operation
$$
x \underset{\varepsilon, a, b}{*} y = \varepsilon x + a y + b,~~x, y \in \mathbb{Z},~~\varepsilon \in \{ \pm 1 \}, a, b \in \mathbb{Z},
$$
is a quandle if and only if it is the trivial quandle: $\varepsilon = 1$, $b = 0$, or it  is the core quandle with the operation $x * y = - x + 2 y.$

2) On the set of integers the operation
$$
x \underset{\varepsilon,  b}{*} y  = \varepsilon x + b,~~x, y \in \mathbb{Z}
$$
defines a rack for $ \varepsilon \in \{ \pm 1 \}$, $b \in \mathbb{Z}$. 
\end{proposition}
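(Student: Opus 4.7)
The plan for part (1) is to extract strong constraints from the idempotency axiom (Q1) and then verify the remaining axioms on the short list of surviving operations. First I would compute $x \underset{\varepsilon,a,b}{*} x = (\varepsilon + a) x + b$ and demand this equal $x$ for every $x \in \mathbb{Z}$. Since $\mathbb{Z}$ is infinite, the resulting linear identity $(\varepsilon + a - 1) x + b = 0$ forces both $\varepsilon + a = 1$ and $b = 0$. Combined with $\varepsilon \in \{\pm 1\}$, only two candidates survive: $(\varepsilon,a,b) = (1,0,0)$, giving the trivial operation $x * y = x$, and $(\varepsilon,a,b) = (-1,2,0)$, giving $x * y = -x + 2y$, the core quandle on $\mathbb{Z}$.

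Next I would verify that both surviving candidates actually define quandles. The trivial case is immediate. For $x * y = -x + 2y$ invertibility (Q2) is the observation that $z \mapsto -z + 2y$ is an involution of $\mathbb{Z}$, so the equation $x = z * y$ has the unique solution $z = 2y - x$. Self-distributivity (Q3) reduces to the identity $(x*y)*z = x - 2y + 2z = (x*z)*(y*z)$, verified by direct expansion. This establishes the ``only if'' direction; the ``if'' direction is the verification just mentioned.

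For part (2) the operation $x \underset{\varepsilon,b}{*} y = \varepsilon x + b$ does not involve $y$, so both rack axioms collapse to elementary identities. Invertibility requires a unique $z$ with $\varepsilon z + b = x$, namely $z = \varepsilon(x - b)$. For self-distributivity both sides reduce to $\varepsilon(\varepsilon x + b) + b = x + (\varepsilon+1) b$, and hence agree. Note that this family fails idempotency except in the already identified trivial case, which is exactly why the statement claims only a rack, not a quandle, structure.

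There is no real obstacle: the whole argument is a case analysis driven by (Q1), followed by routine linear algebra for (Q2) and (Q3). The one point worth flagging is that working over the infinite set $\mathbb{Z}$ is essential — a polynomial identity over $\mathbb{Z}$ forces coefficient-wise equality, and it is precisely this that collapses the three-parameter family in part (1) down to only two quandle operations.
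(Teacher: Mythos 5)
Your argument is correct and complete. The paper actually states Proposition \ref{ZRT} without proof (it is used as an ingredient in the proof of Proposition \ref{mr}), so there is no official argument to compare against; yours is the natural one. In part (1) you correctly observe that invertibility holds automatically for the whole family (since $z \mapsto \varepsilon z + ay + b$ is a bijection of $\mathbb{Z}$ whenever $\varepsilon = \pm 1$), so idempotency is the binding constraint, and over the infinite set $\mathbb{Z}$ the identity $(\varepsilon + a - 1)x + b = 0$ forces $\varepsilon + a = 1$ and $b = 0$, leaving exactly the two cases $(1,0,0)$ and $(-1,2,0)$; your subsequent verifications of (Q2) and (Q3) for $x*y = -x+2y$ and of the rack axioms for $x*y = \varepsilon x + b$ in part (2) are all accurate. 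One cosmetic remark: the statement of the proposition describes the trivial case only by ``$\varepsilon = 1$, $b = 0$,'' omitting $a = 0$; your derivation makes explicit that $a = 0$ is also forced, which is a small improvement in precision over the statement as printed.
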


We will call the rack operations from this proposition by linear operations.

The next proposition gives all 2-multi racks on $\mathbb{Z}$ with linear operations. 

\begin{proposition} \label{mr}
Let $b$, $b_1$, and $b_2$ are integer numbers. The next pairs of multiplications define 2-multi-rack on $\mathbb{Z}$:

1) $x *_1 y = x$,~~~ $x *_2 y =  x$;

2) $x *_1 y = 2 y - x$, ~~~ $x *_2 y =  x$;

3.1) $x *_1 y = x + b_1$, ~~~ $x *_2 y  = x + b_2$;

3.2) $x *_1 y = x$, ~~~ $x *_2 y = - x + b$;

3.3) $x *_1 y = -x + b$, ~~~ $x *_2 y = x$;

3.4) $x *_1 y = -x + b$,~~~ $x *_2 y =- x + b$.

\end{proposition}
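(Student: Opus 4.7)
The approach is to reduce everything to linear algebra: by Proposition \ref{ZRT} every linear rack on $\mathbb{Z}$ has the affine form $x *_k y = \varepsilon_k x + a_k y + b_k$, with $\varepsilon_k \in \{\pm 1\}$ and either $a_k = 0$ (arbitrary $b_k \in \mathbb{Z}$), or $(\varepsilon_k, a_k, b_k) = (-1, 2, 0)$ (the core quandle $x*y = 2y - x$). So I would plug this ansatz into the mixed-distributivity identity $(x *_i y) *_j z = (x *_j z) *_i (y *_j z)$ and compare coefficients of $x$, $y$, $z$, and the constant term in the resulting affine function of $(x, y, z)$.

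A direct expansion shows that the $x$- and $y$-coefficients agree identically, while the $z$-coefficient and the constant term yield
\begin{equation*}
a_j(1 - \varepsilon_i - a_i) = 0, \qquad (\varepsilon_j - 1)\, b_i = (\varepsilon_i + a_i - 1)\, b_j,
\end{equation*}
together with the pair obtained by interchanging $i \leftrightarrow j$. These four scalar constraints on $(\varepsilon_1, a_1, b_1, \varepsilon_2, a_2, b_2)$ are equivalent to being a $2$-multi-rack by Theorem \ref{coset}.

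Next I would split into cases according to $(a_1, a_2) \in \{0, 2\}^2$. When $a_1 = a_2 = 0$ the $z$-equations are automatic, and the constant-term equation reduces to $(\varepsilon_2 - 1)b_1 = (\varepsilon_1 - 1)b_2$; running through the four sign combinations $(\varepsilon_1, \varepsilon_2) \in \{\pm 1\}^2$ produces items 1 and 3.1--3.4, with cases 3.2 and 3.3 arising from $(1,-1)$ and $(-1,1)$ respectively. When exactly one $a_k$ equals $2$, say $a_1 = 2$ so that $*_1$ is the core quandle, the $z$-equation $a_1(1 - \varepsilon_2 - a_2) = 0$ forces $\varepsilon_2 = 1$ and $a_2 = 0$, after which the constant equation $(\varepsilon_1 - 1)b_2 = 0$ forces $b_2 = 0$; hence $*_2$ is trivial, recovering item 2 (and the swap $a_2 = 2$ gives the same pair with the roles exchanged).

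The main obstacle is purely bookkeeping: the four constraints couple the parameters of $*_1$ and $*_2$ asymmetrically, so one has to enumerate the sign combinations carefully to ensure that neither cases 3.2 and 3.3 collapse together nor any configuration is double-counted, and that the side condition $b = 0$ arising in the mixed cases with the core quandle is not overlooked.
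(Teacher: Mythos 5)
Your verification that the listed pairs satisfy mixed distributivity is correct and follows essentially the paper's route: both reduce to the linear forms of Proposition \ref{ZRT}, expand $(x*_iy)*_jz=(x*_jz)*_i(y*_jz)$, and observe that only the constant term (and, in your more general ansatz, the $z$-coefficient) gives a nontrivial constraint; your condition $(\varepsilon_j-1)b_i=(\varepsilon_i+a_i-1)b_j$ specializes, for $a_i=a_j=0$, to the paper's $\varepsilon_2 b_1+b_2=\varepsilon_1 b_2+b_1$, and the four sign cases are handled identically. Your uniform affine ansatz $x*_ky=\varepsilon_kx+a_ky+b_k$ is an improvement in one respect: it covers the mixed case (core quandle paired with a translation rack) by the same two scalar equations, whereas the paper checks item 2 by hand and explicitly leaves the remaining exhaustiveness verification to the reader. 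One small misattribution: the equivalence of your four scalar constraints with being a $2$-multi-rack is just the defining axioms (\ref{mixdis}), not Theorem \ref{coset}, which goes in the opposite direction (multi-rack $\Rightarrow$ valued rack).

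There is one concrete omission in your case analysis. You announce a split over $(a_1,a_2)\in\{0,2\}^2$ but treat only three of the four cells: $a_1=a_2=0$ and the two mixed cells. The remaining cell $a_1=a_2=2$, i.e.\ both operations equal to the core quandle $x*y=2y-x$, satisfies all four of your constraints (here $a_j(1-\varepsilon_i-a_i)=2(1+1-2)=0$ and $b_1=b_2=0$), as it must, since for equal operations mixed distributivity is just self-distributivity of $\Core(\mathbb{Z})$. This does not affect the literal statement of the proposition --- every listed pair is indeed a $2$-multi-rack, and your argument establishes that --- but it does affect the exhaustiveness claim your write-up is implicitly aiming at (and which the surrounding text of the paper asserts): the pair with $x*_1y=x*_2y=2y-x$ is a linear $2$-multi-rack absent from the list. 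Either add this cell to your enumeration and record the extra pair, or restrict your conclusion to the verification half of the claim.
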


\begin{proof}

As follows from Proposition \ref{ZRT}, all these operations are rack operations. We need to check the distributivity axioms,
\begin{equation} \label{mixdis}
(x *_1 y) *_2 z = (x *_2 z) *_1 (y *_2 z),~~(x *_2 y) *_1 z = (x *_1 z) *_2 (y *_1 z),~~x, y, z \in \mathbb{Z}.
\end{equation}

For the trivial multiplications from 1) these axioms are evidently true.

2) The first operation defines the Core quandle and the second operation defines the trivial quandle. Hence, the distributivity  axioms have the form,
$$
x *_1 y = x  *_1 y,~~x  *_1 z =x *_1 z.
$$
We have identities.

3)  By Proposition \ref{ZRT}, the following two operations define racks on $\mathbb{Z}$,
$$
x *_1 y = \varepsilon_1 x + b_1,~~~x *_2 y = \varepsilon_2 x + b_2,~~~x, y \in \mathbb{Z}.
$$
 Further we need to find conditions on $\varepsilon_1, \varepsilon_1 \in \{ \pm 1 \}$ and  $b_1, b_2 \in \mathbb{Z}$ under which the axioms~(\ref{mixdis}) fulfill.
It is easy to see that from these  axioms  follows
$$
 \varepsilon_2 b_1 + b_2 =  \varepsilon_1 b_2 + b_1.
$$
Depending on the values of  $\varepsilon_i$, consider 4 cases:

{\it Case 1}:  $\varepsilon_1 = \varepsilon_2 = 1$. In this case we get the equality 
$$
b_1 + b_2 = b_2 + b_1,
$$
which is true for any $b_1$ and $b_2$. We get the  2-multi-rack from 3.1.

{\it Case 2}:  $\varepsilon_1 = 1$, $\varepsilon_2 = -1$. In this case $b_1 = 0$ and we have the  2-multi-rack from 3.2.

{\it Case 3}:  $\varepsilon_1 = -1$, $\varepsilon_2 = 1$. This case is similar to the previous one. We have the case 3.3.

{\it Case 4}:  $\varepsilon_1 = \varepsilon_2 = -1$. In this case $b_1 = b_2$ and we have the  2-multi-rack from 3.4.

To prove that this is the only 2-multi-racks on $\mathbb{Z}$ with linear operations, one needs to consider all other pair of operations from Proposition \label{ZRT} and to check the axioms (\ref{mixdis}).
\end{proof}

\subsection{2-multi-racks and solutions for the Braid equation}

It is known that any rack $(X, *)$ defines a solution $R(x, y) = (x * y, x)$  to the Braid equation:
$$
(R \times \id) (\id \times R) (R \times \id) = (\id \times R) (R \times \id)  (\id \times R).
$$
We can interpret  $(X, *)$ is a 2-multi-rack $(X, *_1, *_2)$, where $ x *_1 y=x * y$, and the second multiplication is trivial, $x *_2 y = x$.

\begin{question}
For what 2-multi-racks $(X, *_1, *_2)$ with non-trivial operations the map $R \colon X \times X \to X \times X$, $R(x, y) = (x *_1 y, x *_2 y )$ gives a solution to the Braid equation?
\end{question}
The general conditions on these operations can be found in \cite{BCEIKL}. Analizing linear 2-multi-racks from Proposition \ref{mr}, we get

\begin{proposition} \label{BE}
The following maps $R \colon \mathbb{Z} \times \mathbb{Z} \to \mathbb{Z} \times \mathbb{Z}$ give solutions to the Braid equation on $\mathbb{Z}$:

1) $R(x, y) =( x,  y)$;

2.1) $R(x, y) = (2 y - x,  x)$;

2.2) $R(x, y) = (y,  2x - y)$;

3) $R(x, y) = (x,  - x + b)$,

where $b$, $b_1$, and $b_2$ are integer numbers.
\end{proposition}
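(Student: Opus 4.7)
The plan is to verify the braid equation
$$(R \times \id)(\id \times R)(R \times \id) = (\id \times R)(R \times \id)(\id \times R)$$
for each of the four families listed, by applying both sides to an arbitrary triple $(x, y, z) \in \mathbb{Z}^3$ and checking componentwise equality. Each of the maps $R$ is an affine function on $\mathbb{Z}^2$, so the verification reduces to a finite check of polynomial identities in $x, y, z$ and, for case 3), the parameter $b$.

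Case 1) $R = \id$ is immediate. For case 2.1) $R(x,y) = (2y - x, x)$ is precisely the standard braid operator $R(a,b) = (a * b, a)$ attached to the Core quandle $\Core(\mathbb{Z})$ recalled just before the proposition, so the braid equation follows from the quandle axioms of $a * b = 2b - a$. For case 3) $R(x,y) = (x, -x + b)$, the corresponding 2-multi-rack from Proposition \ref{mr} (case 3.2) has $x *_1 y = x$ and $x *_2 y = -x + b$; I would expand both sides directly and, using the key simplification $-(-x + b) + b = x$, find that both triple compositions send $(x,y,z)$ to $(x, -x + b, x)$. For case 2.2) $R(x,y) = (y, 2x - y)$, which is not of the form $(x *_1 y, x *_2 y)$ for a 2-multi-rack in Proposition \ref{mr} (the component $x *_1 y = y$ fails invertibility), a similar direct expansion shows that both sides send $(x,y,z)$ to $(z, 2y - z, 4x - 2y - z)$.

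The main obstacle is the bookkeeping in cases 2.2) and 3), where three nested applications of an affine map have to be tracked and the cancellations recorded carefully; the computations themselves are short but error-prone, and one must resist sign mistakes when iterating $x \mapsto -x + b$ or $x \mapsto 2x - y$. No deeper rack-theoretic input is needed beyond the rack-to-braid construction recalled at the start of the subsection and the classification in Proposition \ref{mr}.
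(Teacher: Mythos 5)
Your proposal is correct, and it matches the paper's (essentially omitted) argument: the paper simply asserts the proposition follows by "analizing linear 2-multi-racks from Proposition \ref{mr}" and gives no written proof, so the direct componentwise verification you describe is exactly what is intended. I checked your claimed outputs — $(z,\,2y-z,\,4x-2y-z)$ for case 2.2 and $(x,\,-x+b,\,x)$ for case 3 — and both sides of the braid equation do produce them, while cases 1 and 2.1 follow as you say from triviality and the standard rack-to-braid construction for $\Core(\mathbb{Z})$.
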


\begin{remark}
This proposition holds if we replace ~$\mathbb{Z}$ by any abelian group.
\end{remark}


\bigskip


\section{Rack bialgebras and applications} \label{RBA}

\subsection{Bialgebras}
%

Let us recall the definition of the group algebra for an $n$-valued group (see \cite{BR1}). If $X=\{x_i\}$ is an $n$-valued group with the multiplication
$$
x * y = [z_1, z_2, \ldots, z_n],
$$
then the group algebra $\Bbbk[X]$ is a vector space over   $\Bbbk$ 
with a basis identified with elements of $X$ and the multiplication, which is defined on the basis vectors by the rule:
\bea
\label{mult}
x \cdot y = z_1 +  z_2 +  \ldots +  z_n ,~~x, y \in X.
\eea
We can consider the dual object $C(X)$ - the algebra of functions $f \colon X\rightarrow \Bbbk$, 
then it is equipped with the coassociative comultiplication 
\bea
\Delta \colon C(X)\rightarrow C(X)\otimes C(X)\nn
\eea
defined as follows
\bea
\label{comult}
\Delta f (x,y)=\sum_{i=1}^n f(z_i).
\eea
The natural nondegenerate pairing  $\Bbbk[X]\otimes C(X)\rightarrow \Bbbk$ is given by
\bea
\left( \sum_i \alpha_i x_i,   f\right) = \sum_i \alpha_i f(x_i),~~~\alpha_i \in \Bbbk.   \nn 
\eea
The comultiplication $\Delta$ is the dual map for the multiplication (\ref{mult}). The coassociativity of $\Delta$ and associativity of the multiplication (\ref{mult}) is the consequence of the associativity of the $n$-valued group $X$.

%
Let us recall a definition for an $n$-homomorphism from \cite{B}. For a pair $(A, B)$, where  $A$ is an algebra and $B$ is a commutative algebra one defines a so-called trace map $f \colon A\rightarrow B$ that is a linear homomorphism (that is a homomorphism of vector spaces) such that $f(ab)=f(ba)$. Then one could define a series of derived poly-linear homomorphisms of a trace map $f$:
\bea
\Phi_k(f) \colon \bigotimes^{k} A\rightarrow B\nn
\eea
by the following recurrent procedure
\bea
\Phi_1(f)&=&f, \nn\\ 
\Phi_2(f)(a_1,a_2)&=&f(a_1)f(a_2)-f(a_1 a_2),\nn\\
\ldots &&\ldots\nn\\
\Phi_{k+1}(f)(a_1,\ldots, a_{k+1})&=&f(a_1)\Phi_k(f)(a_2,\ldots,a_{k+1})-\sum_{i=2}^{k+1}\Phi_k(f)(a_2,\ldots,a_1 a_i,\ldots, a_{k+1}),\nn\\
\ldots &&\ldots\nn
\eea
\begin{definition}
A linear homomorphism $f \colon A\rightarrow B$ is called Frobenius $n$-homomorphism if the following two conditions hold
\bea
(1) & f(1) &=n;\nn\\
(2) & \Phi_{n+1}(f)&=0.\nn
\eea
\end{definition}

\begin{remark}
$C(X)$ is an algebra and a coalgebra, but the way how these structures are consistent is different from such for bialgebras (for the Hopf algebras in particular). In this case the comultiplication is an $n$-homomorphism of the multiplication structure  (Lemma 12 of \cite{B}).
\end{remark}

\subsection{Spaces of invariant functions via $n$-Hopf structures}
The notion of an $n$-Hopf bialgebra gets quite natural interpretation in terms of the spaces of invariant functions. 
Suppose that $G$ is a group and $B$ is its subgroup. We will denote by $C(G)^B$ the set of $\Bbbk$-valued functions  on $G$ which are invariant under the conjugations by elements of $B$, i.e.
$$
C(G)^B = \{ f \colon G \to \Bbbk~|~f(g) = f(b^{-1} g b )~ \mbox{for any} ~ g \in G, b \in B \}.
$$
The natural coproduct   $\Delta$ on $C(G)$  is defined by the rule
$$
\Delta f(x,y)=f(x\cdot y), ~~f \in C(G). 
$$ 
This could not in general be restricted to the space $C(G)^B.$  However it is known a
%
%
%

\begin{proposition}
Let $B$ be a central subgroup of $G$, then a coproduct $\Delta \colon C(G)^B \to  C(G)^B \otimes C(G)^B$ can be defined by the rule
$$
\Delta f(x,y)=f(x\cdot y), ~~f \in C(G)^B.
$$
\end{proposition}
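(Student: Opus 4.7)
The plan is to verify directly that $\Delta f\in C(G)^B\otimes C(G)^B$ whenever $f\in C(G)^B$. Under the standard identification of $C(G)^B\otimes C(G)^B$ with the subspace of $C(G\times G)$ consisting of functions $F(x,y)$ invariant under the separate $B$-conjugation action $F(x,y)\mapsto F(b_1^{-1}xb_1,b_2^{-1}yb_2)$ in each variable, it suffices to check that $\Delta f$ lies in this invariant subspace.

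First I would fix $f\in C(G)^B$ and arbitrary $b_1,b_2\in B$ and compute
$$
\Delta f(b_1^{-1}xb_1,b_2^{-1}yb_2)=f(b_1^{-1}xb_1\cdot b_2^{-1}yb_2).
$$
Since $B$ is central in $G$, each $b_i$ commutes with every element of $G$, hence $b_1^{-1}xb_1=x$ and $b_2^{-1}yb_2=y$, and the right hand side collapses to $f(xy)=\Delta f(x,y)$. This establishes separate $B$-invariance in each of the two variables, which is exactly what is needed.

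To upgrade this separate invariance to genuine membership in the algebraic tensor product $C(G)^B\otimes C(G)^B$, I would invoke the identity $(V\otimes W)^{H_1\times H_2}=V^{H_1}\otimes W^{H_2}$, valid whenever averaging projectors onto the invariants exist (for instance, for finite $B$ via $e_B=\tfrac{1}{|B|}\sum_{b\in B}b$, whose tensor square projects $C(G)\otimes C(G)$ onto the required subspace). The centrality hypothesis is the only real ingredient of the argument: it is precisely what allows the independent conjugations by $b_1$ and $b_2$ on either side of the product $xy$ to be absorbed simultaneously. Consequently, the main (and essentially only) obstacle when one attempts to weaken the assumption is that for merely normal $B$, the element $b_1^{-1}xb_1\cdot b_2^{-1}yb_2$ need not be $B$-conjugate to $xy$ off the diagonal $b_1=b_2$, so the separate invariance required by the tensor factorisation fails without an additional structural hypothesis enforcing that cancellation.
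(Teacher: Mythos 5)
The paper states this proposition without proof (it is introduced with ``it is known''), so there is no argument of the authors to compare yours against; your verification is correct and supplies what the paper omits. It is worth making explicit what your computation already shows implicitly: since $B$ is central, conjugation by any $b\in B$ is the identity map on $G$, so the invariance condition $f(g)=f(b^{-1}gb)$ is vacuous and $C(G)^B=C(G)$; the proposition then reduces to the bare statement that the standard coproduct $\Delta f(x,y)=f(xy)$ on $C(G)$ takes values in $C(G)\otimes C(G)$. Stating this up front would shorten your argument and make clear that centrality is not merely ``the only real ingredient'' but in fact trivializes the invariance question entirely. The one point your write-up does not fully resolve is the same one the paper glosses over: for infinite $G$ the function $(x,y)\mapsto f(xy)$ need not lie in the \emph{algebraic} tensor product $C(G)\otimes C(G)$ at all; your averaging projector $e_B$ addresses $B$-invariance of a tensor that is already known to exist, not membership of $\Delta f$ in the algebraic tensor product. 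This is harmless in the finite setting the paper is implicitly working in (all its examples are finite), but if you want the statement for general $G$ you should either restrict to representative functions or pass to a completed tensor product.
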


In the case of a finite subgroup $B$ (there is a version for a compact subgroup) we have:
\begin{proposition} 
Suppose that $B$ is a finite subgroup of $G$. Then for any
 $f \in C(G)^B$ the  coproduct:
$$
\Delta f(x,y) = \sum_{b \in B} f(b^{-1} x b \cdot y)
$$
lies in $C(G)^B \otimes C(G)^B$.
\end{proposition}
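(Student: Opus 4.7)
The plan is to verify that $\Delta f$, viewed as a function on $G\times G$, is separately $B$-invariant in each of the two variables, which is the content of membership in $C(G)^B\otimes C(G)^B$. Since $B$ is finite, the defining sum is a legitimate function on $G\times G$, so the only nontrivial issue is the bi-invariance. The two checks use the group structure of $B$ in different ways, and only the second one actually requires the assumption $f\in C(G)^B$.

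First I would fix $b_0\in B$ and compute
$$
\Delta f(b_0^{-1}xb_0,y)=\sum_{b\in B} f\bigl(b^{-1}b_0^{-1}xb_0 b\cdot y\bigr)=\sum_{b\in B} f\bigl((b_0 b)^{-1}x(b_0 b)\cdot y\bigr).
$$
Since left multiplication by $b_0$ is a bijection of $B$, the substitution $b'=b_0 b$ reindexes the sum and yields $\Delta f(x,y)$. This verifies invariance in the first slot and uses nothing about $f$ beyond that it is a function on $G$.

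The second slot is more delicate. Computing
$$
\Delta f(x,b_0^{-1}yb_0)=\sum_{b\in B} f\bigl(b^{-1}xb\cdot b_0^{-1}yb_0\bigr),
$$
one cannot reindex directly, because the conjugation by $b_0$ sits outside the range of the summation variable. Here I would invoke the hypothesis $f\in C(G)^B$, which lets me replace $f(g)$ by $f(b_0\,g\,b_0^{-1})$ inside the sum. The right-hand side becomes
$$
\sum_{b\in B} f\bigl(b_0 b^{-1}xb b_0^{-1}\cdot y\bigr)=\sum_{b\in B} f\bigl((bb_0^{-1})^{-1}x(bb_0^{-1})\cdot y\bigr),
$$
and the substitution $b''=bb_0^{-1}$ (again a bijection of $B$) recovers $\Delta f(x,y)$.

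The principal obstacle, and the conceptual point of the statement, is precisely the asymmetry in the $y$-check: a naive change of variables fails because the outer $b_0$-conjugation meets an expression already averaged over $B$. The $B$-invariance of $f$ allows one to trade this outer conjugation for an inner one on the summation variable $b$, where the group structure of $B$ absorbs it. This is exactly what replaces the centrality hypothesis used in the previous proposition; finiteness of $B$ combined with averaging over $B$ plays the role that centrality played when no summation was present.
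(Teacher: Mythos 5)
Your proof is correct and follows essentially the same route as the paper's: verify separate $B$-invariance in each slot, reindex the sum by left translation for the first argument, and for the second argument use the conjugation-invariance of $f$ to move the outer conjugation inside before reindexing by right translation. The only difference is expository — you make explicit that the first check does not use $f\in C(G)^B$ while the second does, which the paper only notes in passing.
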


\begin{proof}

Check that the function $\Delta f$ is invariant under conjugation by $h \in B$ with respect to the first argument:

$$\Delta f(h^{-1} x h, y) = \sum_{b \in B} f(b^{-1}h^{-1} x  h b \cdot y)=\sum_{b^{\prime } \in B} f((b^{\prime})^{-1} x b^{\prime} \cdot y) = \Delta f(x,y).$$

Check that the function  $\Delta f$ is invariant under conjugation with respect to the second argument:

$$\Delta f(x, h^{-1} y h)=\sum_{b \in B} f(b^{-1}x b  \cdot  h^{-1} y  h )= \sum_{ b \in B} f(h b^{-1}x b h^{-1}  \cdot  y )=\sum_{b^{\prime } \in B} f((b^{\prime})^{-1} x b^{\prime} \cdot y) = \Delta f(x,y).$$
Here, in the second equality, we used the fact that $f$ lies in $C(G)^B$.

\end{proof}
 
 \begin{remark}
 This construction rephrase the coset construction of an $n$-valued group and its group $n$-Hopf algebra.
 \end{remark}
%
%
%

\begin{example}
Let $G = SL_2(\mathbb{Z}_2)$ be the special linear group over the 2-elements field.  
Write down all elements of $G$  and their orders:

$$E=\left( \begin{matrix}1&0\\ 0&1\end{matrix} \right);$$
$$A_1=\left( \begin{matrix}0&1\\ 1&0\end{matrix} \right) , \, Ord(A_1)= 2;
\, \, \,\, \, \, A_2=\left( \begin{matrix}1&1\\ 0&1\end{matrix} \right)  , Ord(A_2)=2;$$
$$A_3=\left( \begin{matrix}1&0\\ 1&1\end{matrix} \right); \, Ord(A_3)=2;
$$
$$
C_1=\left( \begin{matrix}1&1\\ 1&0\end{matrix} \right), \, Ord(C_1)=3;~~
C_2=\left( \begin{matrix}0&1\\ 1&1\end{matrix} \right), \, Ord(C_2)=3.
$$

Then
$$G=SL_2(\mathbb{Z}_2)=GL_2(\mathbb{Z}_2)= \left\{ E, A_1, A_2, A_3, C_1, C_2 \right\}.$$
The Borel subgroup of $G$ is $B=\left\{ E, A_2  \right\}.$

Note further that 
$$A_1 A_3=C_1,~~A_1 A_2 = C_2,~~C_1^{-1}=C_2.
$$
It is easy to see that the map  $SL_2(\mathbb{Z}_2) \to S_3$ to the symmetric group $S_3$, which is defined  by the rules,
$$A_1 \longmapsto  (12), A_2 \longmapsto (23), A_3 \longmapsto (13)$$
$$C_1 \longmapsto  (123), C _2 \longmapsto (132),$$
is an isomorphism.

Consider the  action of group $B$ on $G$ by conjugation. We get four orbits,
$G / B =\left\{ \bar{e}, \bar{(12)}, \bar{(23)}, \bar{(123)}\right\}.$ Hence, $C(G)^B = \mathbb{Z}_2^4$.
This example coincides with our coset-type example \ref{coset-Q}.
\end{example}

\subsection{Rack bialgebras}

To each finite quandle $(Q, *)$ one can associate a bialgebra $\Bbbk[Q]$ which is a linear space those basis we denote by the same letters $e_i$  as elements of $Q$. It is equipped with a bilinear multiplication defined on generators as follows
\bea
m(e_i,e_j)=e_i\ast e_j;\nn
\eea
and a linear comultiplication
\bea
\Delta e_i=e_i\otimes e_i.\nn
\eea


It is easy to check that $\Delta$ and $m$ satisfy the conditions:
\begin{enumerate}
\item The comultiplication $\Delta$ is coassociative
\bea
(\Delta\otimes \id)\Delta=(\id\otimes \Delta)\Delta.\nn
\eea
\item
The multiplication is a homomorphism of coalgebras
\bea
\Delta m= (m\otimes m)(\id\otimes \sigma \otimes \id) (\Delta\otimes \Delta)
\eea
where $\sigma$ is a permutation of the corresponding tensor components;
\item
The self-distributivity condition of $\ast$ could be expressed as the compatibility condition for the multiplication and comultiplication
\bea
\label{self-dist1}
m (m\otimes \id) =m (m\otimes m) (\id\otimes \sigma \otimes \id)(\id\otimes\ id \otimes \Delta).
\eea
\end{enumerate}

The self-distributivity condition (\ref{self-dist1}) is equivalent to the formula
$$
(c * b) * a = \sum_{(a)} (c * a^{(1)}) * (b * a^{(2)})
$$
where we used the Sweedler notations:
\bea
\Delta a=\sum_i a_i^{(1)}\otimes a_i^{(2)}=\sum a^{(1)}\otimes a^{(2)}.
\eea

\begin{remark}
A similar approaches to rack bialgebras can be found in \cite{ABRW} and \cite{CCES}.
\end{remark}

In analogy with the group case let us define the dual structure on the space of functions on a rack $Q$ with values in $\Bbbk$. We denote this space as $C(Q).$ The roles of operations change, $C(Q)$ has a commutative and associative pointwise multiplication
\bea
(fg)(x)=f(x)g(x),\qquad f,g\in C(Q), x\in Q.\nn
\eea
We denote it also as $m \colon C(Q)\otimes C(Q)\rightarrow C(Q).$
The comultiplication is dual to the quandle multiplication
\bea
\Delta f (x,y)=f(x\ast y).\nn
\eea
These operations are compatible in the following sense:
\begin{enumerate}
\item
The comultiplication is a homomorphism of algebras
\bea
\label{comp}
 (m\otimes m)(id\otimes \sigma \otimes id) (\Delta\otimes \Delta)=\Delta m,
\eea
where $\sigma$ as before is a permutation of the corresponding tensor components;
\item

The self-dictributivity condition of $\ast$  in $Q$ is equivalent to the following condition on operations in $C(Q)$
\bea \label{self-dist}
(\Delta\otimes id)\Delta=(id\otimes id\otimes m)(id\otimes \sigma \otimes id)(\Delta\otimes \Delta)\Delta.
\eea
\end{enumerate}
\begin{definition}
\label{corack}
We call a linear space $A$ a corack bialgebra if $A$ is equipped with two operations $m \colon A\otimes A\rightarrow A$ and $\Delta \colon A\rightarrow A\otimes A$ such that $m$ is associative and the conditions (\ref{comp}) and (\ref{self-dist}) are satisfied.
\end{definition}
We can generalize the definition for the $n$-bialgebras to the case of corack bialgebras
\begin{definition}
We call a linear space $A$ an $n$-corack bialgebra if in the definition \ref{corack} we change the condition (\ref{comp}) by requiring that $\Delta$ is an $n$-homomorphism of $m$.
\end{definition}

\begin{theorem}
Let $Q$ be an $n$-valued rack, then the algebra of functions $C(Q)$ is an $n$-corack bialgebra.
\end{theorem}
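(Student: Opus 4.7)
The plan is to put on $C(Q)$ the operations dual to the $n$-valued rack multiplication and then to verify in turn each of the three conditions that define an $n$-corack bialgebra. Concretely, I would equip $C(Q)$ with the pointwise product $m(f\otimes g)(x) = f(x) g(x)$, which is automatically associative and commutative, and with the comultiplication
$$
\Delta f(x,y) = \sum_{i=1}^{n} f\bigl((x*y)_i\bigr),
$$
i.e.\ the direct analogue of (\ref{comult}). All the content then sits in two compatibilities: that $\Delta$ is an $n$-homomorphism of $m$, and that the corack self-distributivity (\ref{self-dist}) holds.

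For the $n$-homomorphism condition I would argue pointwise. Fix $(x_0,y_0) \in Q \times Q$; the functional $\mathrm{ev}_{(x_0,y_0)} \circ \Delta \colon C(Q) \to \Bbbk$ sending $f \mapsto \sum_{i=1}^{n} f((x_0*y_0)_i)$ is a trace over an $n$-element multiset in $Q$ and is therefore a Frobenius $n$-homomorphism on the commutative algebra $C(Q)$: the condition $\Delta(1) = n$ is immediate, while $\Phi_{n+1}(\Delta) = 0$ is the standard consequence of the vanishing of the elementary symmetric function $e_{n+1}$ of $n$ variables, repackaged through Newton's identities exactly as in Lemma~12 of~\cite{B}. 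Since these are polynomial identities satisfied at every $(x_0, y_0)$, they lift to identities in $C(Q) \otimes C(Q)$, giving the required $n$-homomorphism property for $\Delta$.

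The substantive step is the corack self-distributivity (\ref{self-dist}). A direct tensor chase evaluates the two sides pointwise as
$$
\bigl((\Delta\otimes \id)\Delta f\bigr)(x,y,z) = \sum_{i,j} f\bigl(((x*y)_i * z)_j\bigr),
$$
$$
\bigl((\id\otimes \id\otimes m)(\id\otimes\sigma\otimes \id)(\Delta\otimes\Delta)\Delta f\bigr)(x,y,z) = \sum_{i,j,k} f\bigl(((x*z)_i * (y*z)_j)_k\bigr),
$$
so equality for every $f$ is precisely the identity between the multisets $(x*y)*z$ and $(x*z)*(y*z)$, counted with the multiplicities inherited from the iterated $n$-valued product --- which is the quantitative form of axiom (M2) in Definition~\ref{many-rack}.

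The main obstacle is this last multiset identity: the two sums contain $n^2$ and $n^3$ summands respectively, so one has to read (M2) as a counted equality rather than a bare set inclusion. The most transparent route is to verify the theorem first for the $n$-valued racks produced in Theorem~\ref{coset} from an $n$-multi-rack $(X,*_1,\dots,*_n)$, where the mixed distributivity $(x*_i y)*_j z = (x*_j z)*_i (y*_j z)$ provides an explicit index matching between the $n^2$ terms on the left and the $n^3$ terms on the right of (\ref{self-dist}) and fixes the required multiplicities, after which the structural identity depends only on (M2) and lifts to all $n$-valued racks.
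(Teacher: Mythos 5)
Your treatment of the $n$-homomorphism condition is essentially the paper's proof: the authors also work pointwise, compute the derived comultiplications $\Delta^{(k)}$ by induction and identify them with polarized elementary symmetric polynomials in the $n$ values $\lambda^i_j=f_i(z_j)$, so that $\Delta^{(n+1)}=0$ because there are only $n$ admissible indices. You compress this by citing Lemma~12 of \cite{B}, but the content is identical, and that part of your argument is fine.

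The gap is in the self-distributivity step, and you have in fact put your finger on a real problem without resolving it. As you compute, the left side of (\ref{self-dist}) evaluated at $(x,y,z)$ is a sum of $n^2$ terms and the right side a sum of $n^3$ terms; taking $f\equiv 1$ gives $n^2\neq n^3$, so the identity as written cannot hold for any $n$-valued rack with $n>1$, and no reading of axiom (M2) --- which is only a multiset \emph{inclusion} of an $n^2$-multiset into an $n^3$-multiset, not an equality with multiplicities --- can repair this. Your proposed remedy, verifying the identity first for the multi-rack examples of Theorem~\ref{coset} and then ``lifting,'' does not close the gap: even for a multi-rack the mixed distributivity $(x*_iy)*_jz=(x*_jz)*_i(y*_jz)$ matches only $n^2$ of the $n^3$ summands on the right (those with equal inner indices), leaving $n^3-n^2$ unmatched terms, and there is in any case no mechanism by which an identity verified on a subclass ``lifts to all $n$-valued racks.'' You should be aware that the paper's own proof silently sidesteps this: it declares the $n$-homomorphism property ``the only thing to prove'' and never verifies (\ref{self-dist}) for $C(Q)$, so either the definition of an $n$-corack bialgebra is meant to relax (\ref{self-dist}) as well (e.g.\ to an inclusion-type or suitably normalized condition), or this point is a genuine lacuna in the paper. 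Your write-up correctly isolates the obstruction but does not supply the missing argument, so as it stands the proposal does not prove the theorem as stated.
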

\begin{proof}
The only thing to prove is the condition that $\Delta$ is an $n$-homomorphism of $C(Q)$ as an algebra. The demonstration goes through the same lines as in the case of $n$-groups, but we provide it here for the sake of completeness. Let us calculate the derived comultiplications, the 2-nd one is: 
\bea
\Delta^{(2)}(f_1,f_2)=\Delta(f_1)\Delta(f_2)-\Delta(f_1 f_2).\nn
\eea
This is an element in $C(Q)\otimes C(Q).$ We associate this with the space of functions on two arguments. Let us recall that
\bea
x\ast y=[ z_1, z_2, \ldots, z_n ]\nn
\eea
and
\bea
\Delta(f)(x, y)=\sum_i f(z_i)=\sum_i \lambda_i,~~~\lambda_i \in \Bbbk.
\eea
Let us denote
\bea
f_i(z_j)=\lambda_j^i, ~~~\lambda_j^i \in \Bbbk, ~~i = 1, 2, \ldots,~~j = 1, 2, \ldots, n.\nn
\eea
Then
\bea
\Delta^{(2)}(f_1,f_2)(x, y)&=&f_1(x \ast y) f_2(x \ast y)-(f_1 f_2)(x \ast y)\nn \\ \\
&=&(\sum_i \lambda^1_i)(\sum_j \lambda^2_j)-\sum_i \lambda^1_i \lambda^2_i=\sum_{i\neq j}\lambda_i^1\lambda_j^2.\nn
\eea
This is exactly the polarized second elementary symmetric polynomial.
Let us demonstrate that for general $k$ the derived comultiplication takes the form
\bea\label{derk}
\Delta^{(k)}(f_1,\ldots,f_k)(x, y)=\sum_{j_1,\ldots,j_k: j_l\neq j_m} \lambda^1_{j_1}\ldots \lambda^k_{j_k}.
\eea
This implies in particular that $\Delta^{(n+1)}=0.$ We demonstrate the formula (\ref{derk}) by induction. Let us express $\Delta^{(k+1)}$ by definition:
\bea
\Delta^{(k+1)}(f_1,\ldots,f_{k+1})(x, y)&=&\Delta(f_1)(x, y)\Delta^{(k)}(f_2,\ldots,f_{k+1})(x, y)\nn\\
&-&\sum_{i=1}^k\Delta^{(k)}(f_2,\ldots,f_i,f_1 f_{i+1},f_{i+2},\ldots,f_{k+1})\nn\\
&=&\sum_{j_1}\lambda^1_{j_1}\sum_{j_2,\ldots,j_{k+1}: j_l\neq j_m} \lambda^2_{j_2}\ldots \lambda^{k+1}_{j_{k+1}}\nn\\
&-&\sum_{i=1}^k \sum_{j_2,\ldots,j_{k+1}: j_l\neq j_m} \lambda^1_{j_{i+1}}\lambda^2_{j_2}\ldots \lambda^{k+1}_{j_{k+1}}.\nn
\eea
The subtraction part of this formula is exactly subtraction of terms of the first summand such that the index $j_1$ coincide with one of the indices $\{j_2,\ldots,j_{k+1}\}.$
\end{proof}


\bigskip

\section{Conclusion}
\begin{itemize}

\item

We expect to develop topological applications of the presented constructions in the near future. This mainly concerns the multivalued quandles of a pair of topological spaces proposed in Turaev's work \cite{T}. We also plan to use the connection of multivalued quandles with various theories of virtual knots and virtual braid groups, including virtual knots with several types of virtual intersections proposed by Kauffman \cite{Kauffman}

\item
The second direction of development of the described tools is the study of braided (noncommutative) version of the Buchstaber-Rees theorem for a set with the action of a braid group. Such objects are widely studied in the theory of quantum groups and its generalizations, for example in the works \cite{Gurevich}.

\item
We place special hopes on the development of the representation theory of racks and quandles and we expect that the corresponding bialgebras: rack, corack and their $n$-valued versions, plays a substantial role in this theory in parallel to the role of the group algebra in Burnside lemma. This could open up new perspectives in the problem of constructing topological invariants of knots and its generalizations.

\end{itemize}

%
%
%
%
%
%

\subsection*{Conflicts of interest}
The authors declare no conflicts of interest.

\begin{ack}
The parts 1, 5 of the work was carried out with the support of the Russian Science Foundation grant 20-71-10110. 
The parts 2, 3 of the work is supported  by the Ministry of Science and Higher Education of Russia (agreement No. 075-02-2023-943).
The section $4$ of this work is supported  by the Theoretical Physics and Mathematics Advancement Foundation BASIS No 23-7-2-14-1. 

We thank V. M. Buchstaber for useful discussions.
\end{ack}
\medskip


\begin{thebibliography}{HD}
\bibitem{BN}
V. M. Buchstaber, S. P. Novikov, \textit{Formal groups, power systems and Adams operators}, Russian Mat. Sb. (N.S.), 84 (126), 1971, 81--118.

\bibitem{BR1}
V. M. Buchstaber, E. G. Rees, {\em Multivalued groups and Hopf n-algebras}, Uspekhi Mat. Nauk, 51:4(310) (1996), 149-150; Russian Math. Surveys, 51:4 (1996), 727-729

\bibitem{BV}
V. M. Buchstaber, A. P. Veselov, {\em Conway topograph, $PGL(2,\mathbb{Z})$-dynamics and two-valued groups}, Uspekhi Mat. Nauk, 74:3(447) (2019), 17-62; Russian Math. Surveys, 74:3 (2019), 387-430

\bibitem{B}
V. M. Buchstaber,  \textit{$n$-valued groups: theory and applications}, Moscow Math. J., 6, no. 1 (20006), 57--84.
    
    
\bibitem{BNY} V. G. Bardakov, M. V. Neshchadim,  and M. K. Yadav, \textit{Symmetric skew braces and brace systems}, Forum Math., 2023, 26 pp.

\bibitem{MCL}
S. Mac Lane. {\em Natural associativity and commutativity}. Rice Univ. Stud., 49(1):28-46, 1963

S. Mac Lane. {\em Categorical algebra}. Bull. Amer. Math. Soc., 71:40-106, 1965.


\bibitem{Rump} W. Rump, \textit{Braces, radical rings and the quantum Yang--Baxter equations}, J. Algebra 307 (2007), 153--170.

\bibitem{GV} L. Guarnieri L. and  Vendramin, Skew braces and the Yang--Baxter equation, Math. Comp. 86 (2017), 2519--2534.


\bibitem{Loday2} J.-L. Loday,  \textit{Dialgebras. in Dialgebras and related operads}, Lecture Notes in Math., 1763, Springer, Berlin, (2001), 7--66.





\bibitem{Pir} T. Pirashvili,  \textit{Sets with two associative operations}, Cent. Eur. J. Math., 1(2)  (2003), 169--183.

\bibitem{Kor} N. A. Koreshkov \textit{n-Tuple algebras of associative type}, Izv. Vyssh. Uchebn. Zaved. Mat., No. 12, 34--42 (2008) [Russian Math. (Iz. VUZ) 52 (12), 28--35 (2008)].




\bibitem{Joyce} D. Joyce, \textit{A classifying invariant of knots, the knot quandle}, J. Pure Appl. Algebra, 23 (1982), 37--65.

\bibitem{Matveev} S. V. Matveev, \textit{Distributive groupoids in knot theory}, (Russian) Mat. Sb. (N.S.) 119 (161), 78--88, 160 (1982).


\bibitem{Andr}  N. Andruskiewitsch and M. Gra\~{n}a,  \textit{From racks to pointed Hopf algebras}, Adv. Math. 178 (2003), no. 2, 177--243.

 


\bibitem{BDS} V. G. Bardakov, P. Dey and M. Singh, \textit{Automorphism groups of quandles arising from groups},  Monatsh. Math. 184 (2017), 519--530.

\bibitem{BarTimSin} V. G. Bardakov, T. R. Nasybullov and Mahender Singh, \textit{Automorphism groups of quandles and related groups},  Monatsh. Math.  189 (2019), 1--21.

\bibitem{BF} V. G. Bardakov, D. A. Fedoseev, \textit{Multiplication of quandle structures}, arXiv:2204.12571.

\bibitem{T}  V. Turaev, Multi-quandles of topological pairs,  arXiv:2205.00951.

\bibitem{K} 
T. A. Kozlovskaya, \textit{Multi-groups}, Vestnik TGU,  to appear.

\bibitem{BS} V. G. Bardakov, A. A. Simonov, \textit{Rings and groups of matrices with a nonstandard product}, (Russian) Sibirsk. Mat. Zh. 53 (2013), no. 4, 741--750; translation in Siberian Math. J. 54 (2013), no. 3, 393--405.


\bibitem{BCEIKL}
V. Bardakov, B. Chuzinov, I. Emel'yanenkov, M. Ivanov, T. Kozlovskaya, V. Leshkov, \textit{Set-theoretical solutions of simplex equations},
  arXiv:2206.08906, 52 pp.


\bibitem{ABRW}
C. Alexandre, M. Bordemann, S. Rivi\'ere, F.  Wagemann,  \textit{Structure theory of Rack-Bialgebras}, J. Generalized Lie Theory Appl., 2016, 10:1
DOI: 10.4172/1736-4337.1000244.

 \bibitem{CCES}
J. Carter, A. Crans, M. Elhamdadi, M. Saito,  \textit{Cohomology of the Categorical Self-Distributivity},  J. Homotopy Relat. Struct.,  3,  no. 1  (2008), 13--63.

\bibitem{Kauffman}
Louis H. Kauffman, {\em 
Penrose evaluations, perfect matching polynomials 
and invariants of multiple virtual knots and links}, A talk at the International conference
GEOMETRIC AND ALGEBRAIC METHODS IN KNOT THEORY
September 16-20, 2023, Sochi

\bibitem{Gurevich}
D. I. Gurevich, {\em Algebraic aspects of the quantum Yang-Baxter equation}, Algebra i Analiz, 2:4 (1990), 119-148; Leningrad Math. J., 2:4 (1991), 801-828

D. Gurevich, A. Radul, V. Rubzov, {\em Non-commutative differential geometry related to the Yang-Baxter equation}, Questions of quantum field theory and statistical physics. Part 11, Zap. Nauchn. Sem. POMI, 199, Nauka, St. Petersburg, 1992, 51-70; J. Math. Sci., 77:2 (1995), 3051-3062




%
%
%
%
%
%
%
%
%
%



 





\end{thebibliography}
\end{document}